\documentclass[letter,12pt]{article}
\usepackage{amssymb}
\usepackage[top=1in, bottom=1in, left=1in, right=1in]{geometry}
\usepackage{amsthm}
\usepackage{amsfonts}
\usepackage{amsmath}
\usepackage{bbm}
\usepackage{abstract}
\usepackage{color}

\newtheorem{theorem}{Theorem}[section]
\newtheorem{lemma}[theorem]{Lemma}
\newtheorem{prop}[theorem]{Proposition}

\newtheorem{conj}[theorem]{Conjecture}

\newtheorem{question}[theorem]{{Question}}

\theoremstyle{definition}

\newtheorem{ques}[theorem]{Question}

\newcommand{\PSH}{{\rm PSH}}

\usepackage{hyperref}
\hypersetup{
    bookmarks=true,         
    unicode=false,          
    pdftoolbar=true,       
    pdfmenubar=true,       
    pdffitwindow=false,    
    pdfstartview={FitH},      
    pdfauthor={T. Darvas},     
    colorlinks=true,       
   linkcolor=black,          
    citecolor=black,        
    filecolor=black,      
    urlcolor=black}           

\title{A decade of metric geometry in the space of K\"ahler metrics}
\author{Tam\'as Darvas}
\date{}
\begin{document}
\maketitle

\begin{abstract}We survey selected developments in the metric geometry of the space of Kähler metrics, emphasizing results from the past decade, highlighting open problems along the way.
\end{abstract}

\section{Introduction} 

Let $(X,\omega)$ be a compact connected K\"ahler manifold. This means that for every point $x \in X$ there exists a neighborhood $U$ and $g \in C^\infty(U)$ such that

$$\omega|_U = i \partial \bar \partial g = \sum_{j,k} i \frac{\partial^2 g}{\partial z_j \bar \partial z_k} dz_j \wedge \overline{d z_k} > 0.$$ 

This implies that $d \omega =0$, hence $\omega$ determines a de Rham class $[\omega] \in  H^2(X,\Bbb R).$  If $\omega'$ is another K\"ahler metric such that $[\omega'] = [\omega]$, then by the $\partial\bar \partial$-lemma of Hodge theory, there exists $v \in C^\infty(X)$ such that $\omega ' = \omega + i \partial \bar \partial v.$
Thus to study K\"ahler  metrics, one can conveniently study $\mathcal H_\omega \subset C^\infty(X)$, the space of K\"ahler potentials:
\[
\mathcal H_\omega := \{u \in C^\infty(X) \ |\  \omega_u := \omega + i \partial \bar \partial u > 0\}.
\]

The quest for a canonical representative in $\mathcal{H}_\omega$ has a long and rich history, originating from Calabi’s influential conjectures \cite{Cal} and continuing with Yau’s celebrated resolution \cite{Yau}. In connection with this program, it has become clear that one must endow $\mathcal{H}_\omega$ with special $L^p$-type Finsler structures, where the parameter $p$ depends on the geometric application considered \cite{Da15}:
\begin{equation}\label{eq: Finsler_eq}
\| \phi\|_{L^p} = \left(\int_X | \phi|^p \omega_u^n \right)^{1/p}, \quad u \in \mathcal H_\omega, \ \phi \in C^\infty(X) \simeq T_u \mathcal H_\omega.
\end{equation}

The case $p=2$ corresponds to the classical Riemannian metric of Mabuchi, Semmes, and Donaldson \cite{Ma,Se,Do}, and is central to the study of the Calabi flow \cite{CC, St, BDL1}, and uniqueness questions \cite{BaMa,Br15,BrBr}. The $p=1$ Finsler geometry has gained increasing relevance in connection with the study of K-stability and energy properness \cite{BBJ, BJ25, BDL2, CC1, CC2,DZ25, LiG}. Additionally, recent exciting work of Finski concerning  filtrations and spectral measures \cite{Fin2}, and spectral theory of Toeplitz operators \cite{Fin3}, uses the full spectrum of $L^p$ distances.

Several surveys explore the deep relationship between the Finsler geometries defined in \eqref{eq: Finsler_eq} and canonical metrics \cite{Da17s, Bl2, CCX, Gbook, PS,Sze_book, Ysurv}. The present note has much more modest goals: to describe selected developments in the metric geometry of $\mathcal H_\omega$ over the past decade, while highlighting a number of natural open problems that have received less attention that they perhaps deserve.

Owing to space constraints, several exciting directions are not covered, including intriguing developments related to K\"ahler quantization \cite{PS1,CS, br2, DLR}, the case of degenerate classes \cite{Da17m,DiG, DDL3,DiLu,Gup24, Tru22}, or works that cover other types of metric geometries on $\mathcal H_\omega$ \cite{Cala,ClRb, CalZh,Da24}. For a survey that develops the theory from basic principles, we refer the reader to \cite{Da17s}.

\paragraph{Acknowledgments.} We thank S. Boucksom, S. Finski, C. H. Lu, and Y. Wu for helpful discussions related to the topics of this survey. We are also grateful to the organizers of the 2025 ICBS Conference for their hospitality; much of this survey was written during the conference. AI tools were used for proofreading and to improve the clarity of the exposition.

\section{Weak geodesic segments} 

Let $[0,1] \ni t \mapsto u_t \in \mathcal H_\omega$ be a smooth path, in the sense that $v(t,x):= u_t(x) \in C^\infty([0,1] \times X)$. This curve is a geodesic if it has vanishing acceleration with respect to the $L^2$ Mabuchi metric: $\nabla_{\dot u_t} \dot u_t = 0$. As shown by Mabuchi \cite{Ma}, this is equivalent to:
\begin{equation}\label{eq: geod_eq_Lev_Civ}
\ddot u_t - \frac{1}{2}\langle \nabla^{\omega_{u_t}} \dot u_t , \nabla^{\omega_{u_t}} \dot u_t \rangle = 0, \quad t \in [0,1],
\end{equation}
where $\nabla^{\omega_{v}}$ denotes the gradient with respect to the K\"ahler metric $\omega_v$. This equation can be reinterpreted as a complex Monge–Ampère equation. To that end, we introduce the trivial complexification $u \in C^\infty(S \times X)$ defined by
\[
u(s,x) = u_{\textup{Re }s}(x),
\]
where $S := \{0 < \textup{Re }s <1 \} \subset \mathbb{C}$ is the open unit strip. After some algebraic manipulations, the geodesic equation becomes:
\begin{equation}\label{eq: geodesic_eq}
(\pi^*\omega + i \partial \overline{\partial}u)^{n+1} = 0 \quad \text{on } S \times X,
\end{equation}
where $\pi: S \times X \to X$ is the projection. Thus, finding a smooth geodesic joining $u_0, u_1 \in \mathcal H_\omega$ reduces to solving the following boundary value problem:
\begin{equation}\label{eq: BVPGeod}
\begin{cases}
(\pi^* \omega + i \partial \overline{\partial}u)^{n+1}=0, \\ 
\omega + i \partial \overline{\partial}u|_{\{s \}\times X} >0, \quad s \in S, \\ 
u(t+ir,x) = u(t,x), \quad \forall x \in X,\ t \in (0,1),\ r \in \mathbb{R},\\
\lim_{s \to 0}u(s,\cdot)=u_0, \quad \lim_{s \to 1}u(s,\cdot)=u_1.
\end{cases}
\end{equation}
Here, the limits $s \to 0,1$ are interpreted in the uniform $C^0$ topology.

In general, smooth solutions to this boundary problem do not exist, as recalled below. However, weak solutions (in the Bedford--Taylor sense \cite{BT}) with partial regularity are available, as we now describe.

Let $Y$ be a complex manifold equipped with a closed $(1,1)$-form $\eta$. The space of $\eta$-plurisubharmonic functions is denoted by $\PSH(Y,\eta)$. With slight abuse of precision, this space is defined as:
\[
\PSH(Y,\eta) := \{u : Y \to [-\infty,\infty) \mid u \text{ is usc and } \eta + i\partial \bar \partial u \geq 0 \text{ as  currents} \}.
\]

As observed by Berndtsson \cite[Section 2.1]{br2}, the (weak) solution $u$ of \eqref{eq: BVPGeod} can be expressed via a generalization of the classical Perron–Bremermann envelope from local theory. A key advantage of this approach is that it allows for rather general boundary data.
We refer to $i\mathbb{R}$-invariant functions in $\textup{PSH}(S \times X, \pi^*\omega)$ as \emph{weak subgeodesics}, with $S = \{0 < \textup{Re }z < 1\} \subset \mathbb{C}$. Such subgeodesics \( w \) are completely determined by their restrictions to \( (0,1) \times X \), which we may view as a curve \(  (0,1) \ni t \to w_t \in \textup{PSH}(X, \omega) \).

Let $u_0,u_1 \in \textup{PSH}(X,\omega)$. The corresponding Perron envelope is given by:
\begin{equation}\label{eq: udef1}
u = \sup_{v \in \mathcal S} v,
\end{equation}
where
\[
\mathcal S := \left\{ [0,1] \ni t \mapsto v_t \in \text{PSH}(X,\omega)\, \middle|\, v \text{ is a subgeodesic with } \lim_{t \to 0,1} v_t \leq u_{0,1} \right\}.
\]
Here the limits $\lim_{t \to 0,1} v_t$ are interpreted in the weak $L^1$ sense of $\omega$-psh functions. 
Due to convexity in $t$, every candidate subgeodesic satisfies $w_t \leq (1-t)u_0 + tu_1$, hence the resulting envelope $u$ from \eqref{eq: udef1} is  $\pi^*\omega$-psh on $ S \times X$.

We refer to the curve $t \mapsto u_t \in \textup{PSH}(X,\omega)$ obtained from \eqref{eq: udef1} as the \emph{weak geodesic} connecting $u_0$ and $u_1$. The following lemma confirms that this construction solves the boundary problem in the bounded setting:

\begin{lemma}[\cite{br2}]\label{lem: boundedBVP_solution}
Suppose $u_0,u_1 \in \textup{PSH}(X,\omega) \cap L^\infty$. Then the unique bounded $\pi^*\omega$-psh solution to \eqref{eq: BVPGeod} is given by the envelope \eqref{eq: udef1}.
\end{lemma}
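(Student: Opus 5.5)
The plan is to prove, in order: (i) the envelope $u$ from \eqref{eq: udef1} is a bounded $i\mathbb R$-invariant $\pi^*\omega$-psh function; (ii) it attains the boundary values uniformly; (iii) it is maximal, i.e.\ $(\pi^*\omega+i\partial\bar\partial u)^{n+1}=0$; and (iv) any bounded solution coincides with it. The fiberwise condition in \eqref{eq: BVPGeod} is read in the weak sense $\omega+i\partial\bar\partial u_t\ge 0$, which holds because $u_t\in\PSH(X,\omega)$.

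For (i) and (ii), set $C:=\|u_0-u_1\|_{L^\infty}$. The curves $v_t:=\max(u_0-Ct,\,u_1-C(1-t))$ and $v_t := u_0 - Ct$ are subgeodesics, being maxima of functions of the form $\omega$-psh plus a linear function of $\textup{Re}\, s$. Both belong to $\mathcal S$. Combined with the convexity bound $w_t\le (1-t)u_0+tu_1$ for every $w \in \mathcal S$, this gives
\[
\max(u_0-Ct,\,u_1-C(1-t))\le u_t \le (1-t)u_0+tu_1 .
\]
Hence $u$ is bounded and $u_t\to u_{0,1}$ uniformly as $t\to 0,1$. The upper semicontinuous regularization $u^*$ satisfies the same two-sided bound, since the right-hand side is usc. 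Therefore $u^*\in\mathcal S$ and $u=u^*$. The envelope is $i\mathbb R$-invariant because $\mathcal S$ consists of $i\mathbb R$-invariant functions and the supremum preserves this.

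Step (iii) is the main obstacle. I would use the standard local Perron/balayage argument of Bedford--Taylor. Take a small ball $B\Subset S\times X$ on which $\pi^*\omega=i\partial\bar\partial g$. Solve the homogeneous Dirichlet problem for $\tilde u:=g+u$ on $B$ with boundary data $\tilde u|_{\partial B}$; this is possible by Bedford--Taylor, after approximating the usc data from above by continuous data. Gluing the solution with $u$ outside $B$ produces a $\pi^*\omega$-psh competitor $\ge u$ with the same boundary behaviour. It is not $i\mathbb R$-invariant, however, so I would instead use an invariant version. One option is to take the supremum over all competitors without imposing invariance; by uniqueness of the envelope, and since translates in $i\mathbb R$ of competitors are again competitors, the resulting envelope is automatically invariant. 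This shows that $\sup\mathcal S$ equals the unrestricted Perron envelope on $S\times X$, which is maximal by the balayage argument. Hence $(\pi^*\omega+i\partial\bar\partial u)^{n+1}=0$ on each $B$.

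For (iv), let $w$ be any bounded $\pi^*\omega$-psh solution of \eqref{eq: BVPGeod}. It lies in $\mathcal S$, so $w\le u$. For the reverse inequality, apply the comparison principle for bounded $\pi^*\omega$-psh functions on $S\times X$ with maximal $w$. To handle the unbounded strip, reduce via $i\mathbb R$-invariance to the annulus $\{1<|z|<e\}\times X$ using $z=e^{s}$, after noting that both functions are invariant; invariance of $w$ is part of \eqref{eq: BVPGeod}. Both functions are continuous up to the boundary in the uniform sense with equal boundary values. Comparing $u-\varepsilon$ with $w$ then gives $u\le w+\varepsilon$, and letting $\varepsilon\to0$ yields uniqueness.
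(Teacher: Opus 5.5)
Your steps (i), (ii) and (iv) follow the paper's proof: the same barrier $\max(u_0-Ct,u_1-C(1-t))$ combined with the convexity bound, and uniqueness via the comparison principle. Your check that $u^*\in\mathcal S$ adds detail the paper omits. The gap is in step (iii). The paper only invokes ``an adaptation'' of the Perron--Bremermann argument there, and the adaptation you propose does not work.

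It is true that the unrestricted envelope is invariant under $i\mathbb R$-translations. That alone does not show it equals $\sup\mathcal S$. For that you need its usc regularization to satisfy the boundary conditions, i.e.\ an upper bound near $\partial S\times X$ valid for \emph{non-invariant} competitors. The convexity bound $w_t\le(1-t)u_0+tu_1$ is only available for invariant ones.

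On the unbounded strip no such bound exists. The function $h(s)=\mathrm{Im}\, e^{i\pi s}=e^{-\pi\,\mathrm{Im}\, s}\sin(\pi\,\mathrm{Re}\, s)$ is harmonic on $S$, vanishes on $\partial S$, and is unbounded. So for every $M>0$, the function $\max(u_0-C\,\mathrm{Re}\, s,\,u_1-C(1-\mathrm{Re}\, s))+Mh(s)$ is $\pi^*\omega$-psh. Along every vertical line it converges uniformly in $x$ to $u_0$, resp.\ $u_1$, as $\mathrm{Re}\, s\to 0$, resp.\ $1$. If the boundary condition is imposed line by line (the natural extension of the one defining $\mathcal S$), the unrestricted envelope is therefore $+\infty$ in the interior of $S\times X$. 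Your claim that it equals $\sup\mathcal S$ and is maximal by balayage is then false. Requiring boundedness, or uniformity in $\mathrm{Im}\, s$, would exclude this example, but then you owe a Phragm\'en--Lindel\"of type argument, which you do not give.

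The repair is to symmetrize the glued competitor itself rather than enlarge the family. Your $U_B$ is bounded, satisfies $U_B\ge u$, and equals $u$ off $B$. All $i\mathbb R$-translates of $B$ lie in $\{t_1\le \mathrm{Re}\, s\le t_2\}\times X$ for some $0<t_1<t_2<1$. Set $V:=\big(\sup_{r\in\mathbb R}U_B(s+ir,x)\big)^*$. Then:
\begin{itemize}
\item $V$ is $i\mathbb R$-invariant;
\item $V$ is $\pi^*\omega$-psh, since the translates are $\pi^*\omega$-psh and uniformly bounded above;
\item $V$ coincides with $u$ on $\big(\{\mathrm{Re}\, s<t_1\}\cup\{\mathrm{Re}\, s>t_2\}\big)\times X$.
\end{itemize}
Hence $V\in\mathcal S$, and $u\le U_B\le V\le u$. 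So $U_B=u$ on $B$, i.e.\ $u$ is maximal on every small ball, which gives $(\pi^*\omega+i\partial\bar\partial u)^{n+1}=0$.
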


Thus, when $u_0, u_1 \in \textup{PSH}(X,\omega) \cap L^\infty$, we refer to  $t \mapsto u_t$ as the \emph{bounded weak geodesic} connecting $u_0,u_1$.

\begin{proof}
Set $C := \|u_1 - u_0\|_{L^\infty}$. It is straightforward to check that $t \mapsto u_0 - Ct$ and $t \mapsto u_1 - C(1-t)$ are subgeodesics, hence their maximum $v_t := \max(u_0 - Ct, u_1 - C(1-t))$ belongs to $\mathcal S$. This, together with convexity, yields:
\begin{equation}\label{eq: u_boundary_est}
\max(u_0 - Ct, u_1 - C(1-t)) \leq u_t \leq (1-t)u_0 + tu_1.
\end{equation}
Hence, $u \in \PSH(S \times X, \pi^*\omega) \cap L^\infty$ and the boundary data is attained in the uniform topology. That $u$ solves the complex Monge–Ampère equation follows via an adaptation of the classical Perron–Bremermann argument, and uniqueness is a consequence of the comparison principle \cite[Theorem 2.3]{Da17s}.
\end{proof}

When the endpoints $u_0, u_1$ are smooth, second-order estimates for the weak geodesic are available. These were first established in full generality by Chu--Tosatti--Weinkove \cite{CTW}, building on earlier Laplacian estimates of X.X. Chen \cite{Ch} (c.f. \cite{He}). For a thorough discussion and some complementary estimates we refer to  \cite{Bl2,Bl1,BouGeo}.

\begin{theorem}[\cite{CTW, Ch}] \label{thm: ueps_estimates}
Suppose $u_0, u_1 \in \mathcal H_\omega$. Then the boundary value problem \eqref{eq: BVPGeod} admits a solution $u \in \PSH(\overline{S} \times X)$ satisfying the following uniform estimates:
\begin{equation}\label{eq: ueps_estimates}
\| u \|_{C^0(\overline{S} \times X)},\ \| u \|_{C^1(\overline{S} \times X)},\ \| u\|_{C^2(\overline{S} \times X)} \leq C\big(\|u_0\|_{C^4(X)}, \|u_1\|_{C^4(X)}, X, \omega\big).
\end{equation}
\end{theorem}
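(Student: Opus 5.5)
The plan is the standard $\varepsilon$-regularization (continuity method) of Chen, combined with the a priori estimates of Chu--Tosatti--Weinkove. For $\varepsilon>0$, consider the non-degenerate problem
\[
(\pi^*\omega + i\partial\bar\partial u^\varepsilon)^{n+1} = \varepsilon\, (\pi^*\omega + i\, ds\wedge d\bar s)^{n+1} \ \text{ on } S\times X,
\]
with $i\mathbb R$-invariance and boundary values $u_0,u_1$. To handle the boundary, it is convenient to first replace $u_0,u_1$ by the background function $\psi(t,x)=(1-t)u_0+tu_1 + A(t^2-t)$ with $A\gg1$. This $\psi$ is a strict $\pi^*\omega$-psh subsolution, since $u_0,u_1\in\mathcal H_\omega$ and $\ddot\psi=2A$ dominates the cross terms, and it has the correct boundary data. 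The equation is then a nondegenerate complex Monge--Ampère Dirichlet problem on a manifold with boundary, and it admits a subsolution. By the theory of Caffarelli--Kohn--Nirenberg--Spruck and Guan (Dirichlet problem with subsolution), it has a smooth solution $u^\varepsilon$ with $\psi\le u^\varepsilon$ for $\varepsilon$ small. The strategy is to prove estimates up to $C^2$ for $u^\varepsilon$ that are uniform in $\varepsilon$, and then let $\varepsilon\to 0$. Arzelà--Ascoli gives a $C^{1,1}$ limit $u$. Bedford--Taylor continuity of the Monge--Ampère operator under uniform convergence shows that $u$ solves \eqref{eq: BVPGeod}; by Lemma~\ref{lem: boundedBVP_solution} it coincides with the envelope, and the bounds pass to the limit.

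The $C^0$ bound is easy. The function $\psi$ is a lower barrier. An upper barrier is $(1-t)u_0+tu_1$, which is $\pi^*\omega$-subharmonic in the $t$-direction restricted to each slice and lies above $u^\varepsilon$ by the maximum principle: $u^\varepsilon_t$ is convex in $t$ because its complex Hessian is nonnegative. The $C^1$ bound splits into two parts. At the boundary, $\psi\le u^\varepsilon\le (1-t)u_0+tu_1$ with equal boundary values controls $\partial_t u^\varepsilon$ there. Convexity in $t$ then bounds $\dot u^\varepsilon$ globally. The spatial gradient in the interior I would bound by Błocki's maximum principle argument applied to $\log|\nabla u|^2_\omega - \gamma(u)$, or alternatively derive it from the Laplacian bound by interpolation once $C^2$ is known; the latter is the route Chen takes.

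The $C^2$ estimate is the main obstacle. It consists of two pieces. First, boundary second derivative bounds: tangential-tangential derivatives are given by the data, and tangential-normal derivatives come from barriers built from $\psi$ in the standard Guan fashion. The normal-normal bound $\ddot u^\varepsilon$ then follows from the equation, using that the tangential block is uniformly positive near the boundary because $u^\varepsilon$ is close to $\psi$. It is here that $C^4$ norms of $u_0,u_1$ enter, through the need to differentiate the boundary data twice inside the barrier constructions. Second, the interior reduction. For the Laplacian I would use the Aubin--Yau/Chen computation for $\log \mathrm{tr}_{\pi^*\omega + i ds\wedge d\bar s}(\pi^*\omega+i\partial\bar\partial u^\varepsilon) - Bu^\varepsilon$, where the $\varepsilon$-dependence drops out because the right-hand side is $\log\varepsilon$ plus a smooth term.

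The genuinely hard part is upgrading the Laplacian bound to a full real Hessian bound, since $(1,1)$ bounds only yield $C^{1,\alpha}$. Here I would follow Chu--Tosatti--Weinkove. They apply the maximum principle to $Q=\log\lambda_1(\nabla^2 u^\varepsilon)+\varphi(|\nabla u^\varepsilon|^2)+\phi(u^\varepsilon)$, where $\lambda_1$ is the largest eigenvalue of the real Hessian, perturbed to make it smooth at the maximum point. They control the third-order terms by splitting into the cases where $\lambda_1$ is or is not comparable to the other eigenvalues, and they use the first-order condition at the maximum to absorb the bad terms. This yields a uniform bound on $\lambda_1$, hence a uniform $C^{1,1}$ bound independent of $\varepsilon$. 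Combined with the boundary estimates, this gives \eqref{eq: ueps_estimates}.
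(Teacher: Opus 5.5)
The paper gives no argument of its own for this statement, only citing \cite{Ch} and \cite{CTW}. Your outline follows exactly that cited route: Chen's $\varepsilon$-geodesics with $\varepsilon$-independent $C^0$, $C^1$, boundary $C^2$ and Laplacian bounds, upgraded to a full real Hessian bound by the Chu--Tosatti--Weinkove maximum principle for $\lambda_1(\nabla^2 u^\varepsilon)$, then $\varepsilon\to 0$. One small fix: since $\pi^*\omega$ is degenerate in the $s$-direction, the term $-Bu^\varepsilon$ in your Laplacian test function only yields the good term $B\,\mathrm{tr}_{\Omega_\varepsilon}\pi^*\omega$, with $\Omega_\varepsilon=\pi^*\omega+i\partial\bar\partial u^\varepsilon$, which does not control $\mathrm{tr}_{\Omega_\varepsilon}(\pi^*\omega+i\,ds\wedge d\bar s)$. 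Use $-B(u^\varepsilon-\psi)$ instead, or write the equation relative to the K\"ahler form $\pi^*\omega+A\,i\,ds\wedge d\bar s$. Likewise, the normal--normal boundary bound works because the tangential block at $t=0,1$ is exactly $\omega_{u_0}$ or $\omega_{u_1}$, not because $u^\varepsilon$ is close to $\psi$.
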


However, due to the counterexamples of \cite{DL12, LV}, higher-order regularity is known to fail in general. In particular, the geodesic segment joining two smooth Kähler potentials typically fails to be $C^2$, illustrating the limitations of regularity in this infinite-dimensional geometric setting.

An intriguing folklore open question concerns the preservation of strict positivity along $C^{1,\bar{1}}$ geodesic segments:  
\begin{ques}  
Given $u_0, u_1 \in \mathcal{H}_\omega$, does there exist $\varepsilon > 0$ such that  
\[
\omega_{u_t} \geq \varepsilon \, \omega, \qquad t \in [0,1],
\]  
along the $C^{1,\bar{1}}$ geodesic segment $t \mapsto u_t$ connecting $u_0$ and $u_1$?  
\end{ques}  
Partial affirmative results have been obtained by Hu and by Chen--Feldman--Hu \cite{Hu,CFH}, though the findings of \cite{RWN} suggests that the general answer could be negative.

\section{The metric completion} 

Using the theory of currents developed by Bedford--Taylor \cite{BT} and Guedj--Zeriahi \cite{GZ}, one can extend the notion of volume measure $\omega_u^n$ to any potential $u \in \PSH(X,\omega)$.

The resulting measure $\omega_u^n$ is called the non-pluripolar Monge--Amp\`ere measure. Since it does not charge pluripolar sets, typically there might be some loss of total mass:
\[
\int_X \omega_u^n \leq \int_X \omega^n.
\]
Potentials for which equality holds are called \emph{full mass potentials}, and the collection of such functions is denoted by
\[
\mathcal E := \left\{ u \in \PSH(X,\omega) \ \middle|\ \int_X \omega_u^n = \int_X \omega^n \right\}.
\]
This class was introduced by Guedj--Zeriahi (and earlier by Cegrell \cite{Ce98} in the local setting). The finite energy spaces $\mathcal E^p$ are then defined as:
\[
\mathcal E^p := \left\{ u \in \mathcal E \ \middle|\ \int_X |u|^p \omega_u^n < \infty \right\}.
\]

A key feature of these spaces is their \emph{geodesic stability}: if $u_0, u_1 \in \mathcal E^p$, then the weak geodesic $t \mapsto u_t$ connecting them (from \eqref{eq: udef1}) lies entirely within $\mathcal E^p$:

\begin{prop}[\cite{Da17a}]
Let $u_0, u_1 \in \mathcal E^p$. Then $u_t \in \mathcal E^p$ for all $t \in [0,1]$.
\end{prop}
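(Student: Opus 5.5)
The plan is to trap $u_t$ from below by an explicit element of $\mathcal E^p$, using the comparison machinery of finite energy classes. The key fact I will use is the \emph{monotonicity} of $\mathcal E^p$ (Guedj--Zeriahi): if $v \in \mathcal E^p$ and $w \in \PSH(X,\omega)$ with $w \ge v$, then $w \in \mathcal E^p$. This follows from the fundamental inequality $\int_X |w|^p \omega_w^n \le 2^{n+p}\int_X |v|^p\omega_v^n$ for $v \le w \le 0$ (after normalizing $\sup$). Given it, it suffices to find some $\psi_t \in \mathcal E^p$ with $\psi_t \le u_t$.

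The natural candidate is the rooftop envelope $\psi := P(u_0,u_1) = \sup\{ v \in \PSH(X,\omega) : v \le \min(u_0,u_1)\}$. The constant-in-$t$ curve $t \mapsto \psi$ is a subgeodesic, since it is $i\mathbb R$-invariant and independent of $\textup{Re}\, s$. Its boundary limits are $\psi \le u_0$ and $\psi\le u_1$, so it belongs to $\mathcal S$. Hence $\psi \le u_t$ for all $t$, and it remains to show $P(u_0,u_1) \in \mathcal E^p$.

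This last step is the main obstacle: stability of $\mathcal E^p$ under the rooftop envelope. I would first reduce to bounded potentials by approximating $u_0,u_1$ with $\max(u_j,-k)$, and then pass to the limit using monotone convergence of envelopes and of energies. In the bounded case I would use the standard fact that $\omega_{P(u_0,u_1)}^n$ is supported on the contact set $\{P = u_0\}\cup\{P=u_1\}$, where it is dominated by $\mathbbm 1_{\{P=u_0\}}\omega_{u_0}^n + \mathbbm 1_{\{P=u_1\}}\omega_{u_1}^n$. This gives
\[
\int_X |P|^p \omega_P^n \le \int_X |u_0|^p\omega_{u_0}^n + \int_X |u_1|^p\omega_{u_1}^n,
\]
uniformly in the truncation. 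Combined with the full-mass property of $P$, this puts $P \in \mathcal E^p$. Full mass can be obtained, for instance, by showing via the comparison principle that $P \ge$ a suitable element of $\mathcal E$ built from $u_0+u_1$, or by the uniform energy bound together with lower semicontinuity.

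Finally, I need to check that the bound $\psi \le u_t \le (1-t)u_0+tu_1$ indeed places $u_t$ in $\PSH(X,\omega)$ and not identically $-\infty$. This is clear from $\psi \le u_t$, after which monotonicity gives $u_t \in \mathcal E^p$ for every $t \in [0,1]$.
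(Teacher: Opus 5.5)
Your proposal is correct and is essentially the argument behind the cited result in \cite{Da15,Da17a}: the constant curve $t\mapsto P(u_0,u_1)$ competes in the envelope, so $P(u_0,u_1)\le u_t$. The rooftop envelope lies in $\mathcal E^p$ by the contact-set estimate $\omega^n_{P}\le \mathbbm 1_{\{P=u_0\}}\omega^n_{u_0}+\mathbbm 1_{\{P=u_1\}}\omega^n_{u_1}$ on bounded approximants together with a uniform energy bound, and monotonicity of $\mathcal E^p$ then finishes. The only step to tighten is full mass of $P(u_0,u_1)$: use the Guedj--Zeriahi fact that a decreasing limit of bounded $\omega$-psh functions with uniformly bounded $\int_X|\cdot|^p\omega^n_{\cdot}$ belongs to $\mathcal E^p$; your alternative of comparing with something built from $u_0+u_1$ is not needed and not obviously available, since $u_0+u_1$ is only $2\omega$-psh.
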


Let $[0,1] \ni t \mapsto \gamma_t \in \mathcal H_\omega$ be a smooth path. The associated $L^p$-path length is defined as:
\[
l_p(\gamma) = \int_0^1 \| \dot \gamma_t \|_p\, dt = \int_0^1 \left( \int_X |\dot \gamma_t|^p \omega_{\gamma_t}^n \right)^{1/p} dt.
\]
This naturally induces a path-length pseudo-metric:
\[
d_p(u,v) := \inf_{\gamma_0 = u,\ \gamma_1 = v} l_p(\gamma).
\]

While the triangle inequality holds by construction, non-degeneracy is a subtle issue in infinite-dimensional geometry. The following theorem confirms that $d_p$ is indeed a metric, and characterizes its metric completion:

\begin{theorem}[\cite{Da15,Da17a}]
\hfill
\begin{itemize}
  \item[(i)] $(\mathcal H_\omega, d_p)$ is a metric space (for $p=2$, this is due to Chen \cite{Ch}).
  \item[(ii)] The metric completion of $(\mathcal H_\omega, d_p)$ is isometric to the finite energy space $\mathcal E^p$.
  \item[(iii)] Given $u_0,u_1 \in \mathcal E^p$, the weak geodesics of \eqref{eq: udef1} are genuine $d_p$-geodesics in the metric completion $\mathcal E^p$.
\end{itemize}
\end{theorem}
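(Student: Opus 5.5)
The plan is to follow the strategy of \cite{Da15,Da17a}: first understand $d_p$ on $\mathcal H_\omega$ through the $C^{1,\bar 1}$ geodesics of Theorem \ref{thm: ueps_estimates}, then extend everything to $\mathcal E^p$ by monotone approximation.

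\textbf{Step 1: distance along geodesics.} For $u_0,u_1 \in \mathcal H_\omega$ with $C^{1,\bar 1}$ geodesic $u_t$, I will show
\[
d_p(u_0,u_1) = \Big(\int_X |\dot u_0|^p \omega_{u_0}^n\Big)^{1/p} = \|\dot u_t\|_{p,u_t}\quad\text{for all } t\in[0,1].
\]
Constancy of $t\mapsto \|\dot u_t\|_{p,u_t}$ is formally a consequence of the geodesic equation \eqref{eq: geod_eq_Lev_Civ}: the pushforward of $\omega_{u_t}^n$ under $\dot u_t$ is independent of $t$. To make this rigorous I will approximate by $\varepsilon$-geodesics, i.e.\ solutions of $(\pi^*\omega + i\partial\bar\partial u^\varepsilon)^{n+1}=\varepsilon\, \pi^*\omega^n\wedge i\,ds\wedge d\bar s$, whose $C^{1,\bar 1}$ bounds are uniform in $\varepsilon$ by the estimates behind Theorem \ref{thm: ueps_estimates}.

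For the inequality $l_p(\gamma)\ge \|\dot u_0\|_{p,u_0}$ along an arbitrary smooth path $\gamma$, I will prove a Chen-type lemma. For fixed $\psi\in\mathcal H_\omega$, take the $\varepsilon$-geodesics $u^{\varepsilon}_{\cdot}(s)$ joining $\psi$ to $\gamma_s$, and show
\[
l_p\big(u^\varepsilon_{\cdot}(s)\big)\le l_p\big(u^\varepsilon_\cdot(0)\big)+ l_p(\gamma|_{[0,s]})+C\varepsilon .
\]
This is the infinite-dimensional version of ``geodesics from a point are shortest to a curve''. It should follow by differentiating $s\mapsto l_p$ and using convexity of the Finsler norm together with the $\varepsilon$-geodesic equation. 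Letting $\varepsilon\to0$ gives the identity above.

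\textbf{Step 2: non-degeneracy.} Using the monotonicity of energy along the geodesic, I aim to prove a Pythagorean-type comparison
\[
d_p(u_0,u_1)^p \ge C^{-1}\Big(\int_X |u_0-u_1|^p\omega_{u_0}^n+\int_X|u_0-u_1|^p\omega_{u_1}^n\Big).
\]
The idea is as follows. When $u_0\le u_1$, $t$-convexity gives $\dot u_0\ge u_1-u_0\ge \dot u_1$ and $\dot u_0\ge0$, so $\|\dot u_0\|_{p,u_0}\ge \|u_1-u_0\|_{L^p(\omega_{u_0}^n)}$. The general case reduces to this one through the rooftop envelope $P(u_0,u_1)=\sup\{v\le\min(u_0,u_1)\}$. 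For that I need to prove the identity
\[
d_p(u_0,u_1)^p\approx d_p(u_0,P(u_0,u_1))^p+d_p(P(u_0,u_1),u_1)^p,
\]
using the fact that $\omega_{P}^n$ is supported on the contact set. This settles (i).

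\textbf{Step 3: completion and geodesics.} I will define $d_p$ on $\mathcal E^p$ as $\lim d_p(u_0^k,u_1^k)$ along decreasing smooth approximants, which exist by Demailly regularization. Well-definedness will come from the formula $d_p(u,v)=\|\dot u_0\|_{p,u_0}$ for monotone pairs, together with continuity of Monge--Amp\`ere measures along decreasing sequences. By the estimate of Step 2, $d_p$-Cauchy sequences in $\mathcal H_\omega$ can be replaced by monotone ones: I will pass to a subsequence with summable distances and take envelopes $P(u_k,u_{k+1},\dots)$, which converge to a limit in $\mathcal E^p$. Conversely, every element of $\mathcal E^p$ is such a limit, and this gives (ii). For (iii), the weak geodesic of \eqref{eq: udef1} is a decreasing limit of geodesics between approximants, and the identity $d_p(u_t,u_s)=|t-s|\,d_p(u_0,u_1)$ passes to the limit.

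\textbf{Main obstacle.} I expect the hardest steps to be the Chen-type length inequality in Step 1 for general $p$, and the Pythagorean comparison through $P(u_0,u_1)$ in Step 2, since the latter requires fine control of $\omega_P^n$ on the contact set.
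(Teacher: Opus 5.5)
Your overall plan follows \cite{Da15,Da17a}: $\varepsilon$-geodesics and a Chen-type lemma give $d_p(u_0,u_1)=\|\dot u_t\|_{p,u_t}$, then extension by decreasing approximation, rooftop envelopes for completeness, and decreasing limits for (iii). However, Step 2 rests on a sign error. Convexity of $t\mapsto u_t(x)$ gives $\dot u_0\le u_1-u_0\le \dot u_1$, not the reverse. So for $u_0\le u_1$ you have $0\le \dot u_0\le u_1-u_0$, which yields only the \emph{upper} bound $d_p(u_0,u_1)\le \|u_1-u_0\|_{L^p(\omega_{u_0}^n)}$. Your claimed lower bound would force $\dot u_0=u_1-u_0$, i.e.\ $u_t=(1-t)u_0+tu_1$, which is not a geodesic unless $u_1-u_0$ is constant. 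The pointwise lower bound lives at the other endpoint: $d_p(u_0,u_1)^p=\int_X|\dot u_1|^p\omega_{u_1}^n\ge\int_X(u_1-u_0)^p\omega_{u_1}^n$, with the measure of the larger potential. The bound with $\omega_{u_0}^n$ (the smaller potential's measure), which the rooftop reduction does need, holds only up to a constant and requires an extra idea, e.g.\ the midpoint trick. Let $m=(u_0+u_1)/2\in\mathcal H_\omega$ and let $w_t$ be the geodesic from $u_0$ to $m$. Then $w_t\le u_t$, so $0\le\dot w_0\le\dot u_0$, and using $\omega_m\ge\frac12\omega_{u_0}$ you get $d_p(u_0,u_1)^p\ge d_p(u_0,m)^p\ge\int_X(m-u_0)^p\omega_m^n\ge 2^{-n-p}\int_X(u_1-u_0)^p\omega_{u_0}^n$. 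Note also that (i) needs no rooftop at all. The correct inequalities give $d_p(u_0,u_1)^p\ge\int_{\{u_0>u_1\}}(u_0-u_1)^p\omega_{u_0}^n$ and $d_p(u_0,u_1)^p\ge\int_{\{u_1>u_0\}}(u_1-u_0)^p\omega_{u_1}^n$; here $\|\dot u_0\|_{p,u_0}=\|\dot u_1\|_{p,u_1}$ from Step 1 is essential. Since the $\omega_{u_i}^n$ are smooth volume forms and these sets are open, $d_p(u_0,u_1)=0$ forces $u_0=u_1$. Routing (i) through $P(u_0,u_1)$ is in fact premature. $P(u_0,u_1)$ is only $C^{1,\bar 1}$, with $\omega_P^n=0$ off the contact set, so it is not in $\mathcal H_\omega$, and $d_p(u_0,P(u_0,u_1))$ is undefined before the extension in Step 3.

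Second, the mechanism you propose for the Pythagorean comparison points the wrong way. That $\omega_P^n$ is carried by $\{P=u_0\}\cup\{P=u_1\}$ gives $d_p(P,u_0)^p\le\int_X(u_0-P)^p\omega_P^n\le\int_{\{u_0>u_1\}}(u_0-u_1)^p\omega_{u_1}^n$. This bounds the legs of the rooftop by the integrals in \eqref{eq: d_p_quasi_isom}; it is how the \emph{first} inequality there is proved, and using it to bound the legs by $d_p(u_0,u_1)$ is circular. The missing ingredient is a formula for initial velocities. By Kiselman's minimum principle, $\inf_{t\in[0,1]}(u_t-t\tau)=P(u_0,u_1-\tau)$, hence $\{\dot u_0\ge\tau\}=\{P(u_0,u_1-\tau)=u_0\}$. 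Applying this to the geodesic $v_t$ joining $u_0$ to $P(u_0,u_1)$ gives $\dot v_0=\min(\dot u_0,0)$, and symmetrically at $u_1$. Combine this with Step 1 extended to $C^{1,\bar 1}$ endpoints and with the $t$-invariance of $\int_X f(\dot u_t)\omega_{u_t}^n$ for $f(x)=\max(x,0)^p$. This yields the exact identity $d_p(u_0,u_1)^p=d_p(u_0,P(u_0,u_1))^p+d_p(P(u_0,u_1),u_1)^p$. Even then, passing from the legs to $\int_X|u_0-u_1|^p\omega_{u_0}^n$ needs one more comparison-principle estimate, because $|u_0-u_1|$ is not controlled by $u_0-P$ on $\{u_1>u_0\}$. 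One such estimate is $\int_X(M-P)^p\omega_{u_0}^n\le 2^{n+p}\int_X(M-P)^p\omega_P^n$ with $M=\max(u_0,u_1)$. Finally, (ii) also requires non-degeneracy of the extended $d_p$ on $\mathcal E^p$. This follows from the second inequality in \eqref{eq: d_p_quasi_isom} only after invoking the domination principle in $\mathcal E$, and your Step 3 does not address it.
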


Recall that a curve $[0,1] \ni t \mapsto u_t \in \mathcal E^p$ is a $d_p$-geodesic if:
\[
d_p(u_t, u_{t'}) = |t - t'| \cdot d_p(u_0, u_1), \quad \forall t, t' \in [0,1].
\]

In the case $p=2$ the above result was conjectured by V. Guedj, who proved it in the toric case \cite{Guconj}.

Although the definition of $d_p$ involves an infimum over paths, it admits an equivalent and more tractable formulation. Specifically, the following quasi-isometric estimate was established in \cite{Da15}: there exists $C > 1$, depending only on $\dim X$, such that
\begin{equation}\label{eq: d_p_quasi_isom}
\frac{1}{C} d_p(u,v)^p \leq \int_X |u - v|^p \omega_v^n + \int_X |u - v|^p \omega_u^n \leq C\, d_p(u,v)^p, \quad u, v \in \mathcal E^p.
\end{equation}

\paragraph{The $d_1$ metric.} The space $(\mathcal E^1, d_1)$ plays a central role in the study of canonical metrics, and has therefore been investigated in more detail.

This space comes with the Aubin–Yau energy (also called the Monge–Ampère energy), defined as:
\[
I(u) := \frac{1}{V(n+1)} \sum_{j=0}^n \int_X u\, \omega_u^j \wedge \omega^{n-j}, \ u \in \mathcal E^1,
\]
where $V = \int_X \omega^n$ is the total volume. This functional is linear along weak geodesic segments \cite{Ch, Da17, Da15}. Moreover, this functional gives a syntethic formula for the $d_1$ distance \cite{Da15}:
\begin{equation}\label{eq: Pyt}
d_1(u_0,u_1) = I(u_0) + I(u_1) - 2I(P(u_0,u_1)), \ \ u_0,u_1 \in \mathcal E^1,
\end{equation}
where $P(u_0,u_1) \in \mathcal E^1$ is the following rooftop envelope:

$$P(u_0,u_1) := \sup\{v \in \mathcal E^1, \textup{ s.t. } v \leq u_0, \ \ v \leq u_1\}.$$

Unlike the spaces $(\mathcal E^p, d_p)$ for $p > 1$, the $d_1$-geometry is not uniformly convex, and geodesics need not be unique \cite{DL20}. Indeed, due to \eqref{eq: Pyt}, the concatenation of the geodesic segments between $(u_0, P(u_0,u_1))$ and $(P(u_0,u_1), u_1)$ has the same $d_1$-length as the weak geodesic connecting $u_0$ and $u_1$.

Despite this, on the level sets of the $I$ functional the $d_1$ distance has attractive properties. For example, using \eqref{eq: d_p_quasi_isom}, on this set one can compare  $d_1$  to the classical $J$-functional:
\[
J(u) := \frac{1}{V} \int_X u - I(u).
\]
More precisely, there exist constants $m, M, D > 0$ such that:
\[
m\, J(u) - D \leq d_1(0,u) \leq M\, J(u) + D, \ \textup{ for all } \  u \in \mathcal E^1 \textup{ and } I(u)=0.
\]
The values of $m$ and $M$ control the asymptotic growth of the $d_1$ metric. While $M = 2$ is known to be optimal, the sharp value of $m$ remains conjectural outside the toric setting \cite{DGS} (c.f. \cite{BHJ}), where it is known to be:
\[
m = \frac{2}{n+1} \left( \frac{n}{n+1} \right)^n.
\]
We conjecture that this bound is optimal in general:
\begin{conj}
The constant $m = \frac{2}{n+1} \left( \frac{n}{n+1} \right)^n$ is optimal for all Kähler manifolds $(X, \omega)$.
\end{conj}

\section{Geodesic rays} 

Geodesic rays play a fundamental role in the study of canonical metrics, due to their connection with stability notions via asymptotic slope formulas for energy functionals \cite{BBJ,CC2, LiG, DX22}. A systematic investigation of their structure was carried out in \cite{DL20}.

Constructing nontrivial weak geodesic rays is generally challenging. The correspondence introduced by Ross and Witt Nyström \cite{RWN14} provides a flexible and effective tool in this direction. It was later refined in \cite{Da17,DDL3, DX22} etc., and uses the Legendre transform in the time direction to associate geodesic rays to so-called test curves.

For simplicity, we restrict attention to rays of bounded potentials throughout this section.

Fix a potential $u_0 \in \PSH(X,\omega) \cap L^\infty$. A curve 
$$[0,\infty) \ni t \mapsto u_t \in \PSH(X,\omega) \cap L^\infty$$ 
is called a \emph{subgeodesic ray} emanating from $u_0$ if $u_t \to u_0$ in $L^1$-weak topology as $t \to 0$, and the complexification $u(s,x) := u_{\operatorname{Re} s}(x)$ defines a $\pi^* \omega$-plurisubharmonic function on $\{\operatorname{Re} s > 0\} \times X$. For simplicity, we will assume $u_0 = 0$ for the remainder of the discussion.

A subgeodesic ray $t \to u_t$ is said to be \emph{sublinear} if there exists a constant $C > 0$ such that
\[
u_t(x) \leq Ct + C \quad \text{for all } t \geq 0,\ x \in X.
\]

A (bounded) \emph{weak geodesic ray} $t \to u_t$ is a sublinear subgeodesic ray whose complexification satisfies the homogeneous Monge–Ampère equation:
\[
(\pi^* \omega + i \partial \bar \partial u)^{n+1} = 0 \quad \text{on } \{\operatorname{Re} s > 0\} \times X.
\]

\paragraph{The Ross--Witt Nyström correspondence.} We now recall the dual notion of test curves. Following a slight modification of \cite[Definition 5.1]{RWN14}, a map $\mathbb{R} \ni \tau \mapsto \psi_\tau \in \PSH(X,\omega) \cup \{-\infty\}$ is called a \emph{test curve} if:
\begin{itemize}
  \item for each $x \in X$, the function $\tau \mapsto \psi_\tau(x)$ is upper semicontinuous, concave, and decreasing;
  \item $\psi_\tau \equiv -\infty$ for $\tau$ sufficiently positive;
  \item $\psi_\tau \equiv 0$ for $\tau$ sufficiently negative.
\end{itemize}

Throughout this section, we adopt the convention that test curves are parameterized by $\tau$, while geodesic rays are parameterized by $t$. Accordingly, we denote geodesic rays as $\{u_t\}$ and test curves as $\{v_\tau\}$. Thus, the parameters $t$ and $\tau$ can be viewed as dual variables.

Given a test curve $\{\psi_\tau\}$, define the associated thresholds:
\begin{equation}\label{eq: tau_ray}
\tau^-_\psi := \sup\{\tau \in \mathbb{R} \mid \psi_\tau \equiv 0\}, \quad 
\tau^+_\psi := \inf\{\tau \in \mathbb{R} \mid \psi_\tau \equiv -\infty\}.
\end{equation}

Recall from \cite{RWN14, DDL2} that for $\psi, \chi \in \PSH(X,\omega)$, the \emph{envelope of the singularity type of $\chi$ with respect to $\psi$} is defined as:
\begin{equation}\label{eq: sing_env_def}
P[\chi](\psi) := \operatorname{usc}_X \left( \sup \left\{ v \in \PSH(X,\omega) \ \middle| \ v \leq \psi,\ v \leq \chi + C \text{ for some } C \in \mathbb{R} \right\} \right),
\end{equation}
where $\operatorname{usc}_X$ denotes the upper semicontinuous regularization.

Following \cite{DDL2}, $v \in \PSH(X,\omega)$ is called a \emph{model potential} if
\begin{equation}\label{eq: model_def}
v = P[v](0).
\end{equation}
Such potentials play an important role in both the theory of complex Monge–Ampère equations and in the study of plurisubharmonic singularities.

A test curve $\tau \mapsto \psi_\tau$ is said to be \emph{maximal} if all $\psi_\tau, \ \tau \in \Bbb R$ are model, where we used the convention $P(0)[-\infty]  := -\infty$.

The (inverse) Legendre transform of a test curve $\tau \mapsto \psi_\tau$ is defined as:
\[
\check{\psi}_t(x) := \sup_{\tau \in \mathbb{R}} \left( \psi_\tau(x) + t\tau \right), \quad t > 0.
\]
Conversely, the Legendre transform of a subgeodesic ray $t \mapsto \phi_t$ is defined as:
\begin{equation}\label{eq: Leg_transf_de}
\hat{\phi}_\tau(x) := \inf_{t > 0} \left( \phi_t(x) - t\tau \right), \quad \tau \in \mathbb{R}.
\end{equation}

The following theorem establishes the bijective correspondence between maximal test curves and weak geodesic rays, via the above Legendre transforms:

\begin{theorem}[\cite{RWN14, Da17}]\label{thm: max_test_curve_ray_duality}
The (inverse) Legendre transform $\{\psi_\tau\} \mapsto \{\check \psi_t\}$ induces a bijection between maximal test curves and bounded weak geodesic rays emanating from $0$. The inverse is given by $\{\phi_t\} \mapsto \{\hat \phi_\tau\}$.
\end{theorem}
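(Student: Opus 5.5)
The plan has four steps: show that the inverse transform of a maximal test curve is a subgeodesic ray, show it is in fact a geodesic ray, show that the forward transform of a ray is a maximal test curve, and finally check that the two transforms are mutually inverse.

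\textbf{Step 1: $\check\psi$ is a sublinear subgeodesic ray from $0$.} Fix a test curve $\{\psi_\tau\}$. The complexification $(s,x)\mapsto \psi_\tau(x)+\tau\,\mathrm{Re}\,s$ is $\pi^*\omega$-psh, being a psh function plus a pluriharmonic one. Only $\tau\le\tau^+_\psi$ contribute to the supremum, and $\psi_\tau\le 0$ for every $\tau$. This gives $\check\psi_t\le t\tau^+_\psi$, and taking $\tau$ very negative gives $\check\psi_t\ge t\tau^-_\psi$. Hence the usc regularization of the supremum is $\pi^*\omega$-psh and $i\mathbb R$-invariant. The two bounds show it is sublinear and tends uniformly to $0$ as $t\to0$. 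One also checks that the usc regularization changes nothing: the sup is already usc, using the concavity and usc of $\tau\mapsto\psi_\tau$ together with the bounded range of relevant $\tau$.

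\textbf{Step 2: maximality gives the homogeneous Monge--Amp\`ere equation.} It suffices to show that on each finite strip $(0,T)\times X$ the ray $\check\psi$ equals the Perron envelope \eqref{eq: udef1} with endpoint data $0$ and $\check\psi_T$. Lemma \ref{lem: boundedBVP_solution} then yields the equation. Let $w$ be a competitor subgeodesic on $[0,T]$, and extend it to all $t>0$ as the maximal sublinear subgeodesic lying below the same data. Take its Legendre transform $\hat w_\tau$. Since $w$ is convex in $t$ and bounded by $0$ at $t=0$, we get $\hat w_\tau\le 0$. By Kiselman's minimum principle, $\hat w_\tau$ is $\omega$-psh, and it has the same singularity type as $\psi_\tau$, because $w\le\check\psi_T$ at time $T$ forces $\hat w_\tau\le\psi_\tau+C$ through the involutivity of the Legendre transform in $\tau$. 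Maximality $\psi_\tau=P[\psi_\tau](0)$ then gives $\hat w_\tau\le\psi_\tau$, and transforming back gives $w\le\check\psi$.

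\textbf{Step 3: $\hat\phi$ is a maximal test curve.} Let $\{\phi_t\}$ be a bounded weak geodesic ray. Kiselman's principle gives $\hat\phi_\tau\in\PSH(X,\omega)\cup\{-\infty\}$. Concavity and monotonicity in $\tau$, together with upper semicontinuity, are automatic for an infimum of affine functions of $\tau$. The thresholds exist by sublinearity and boundedness: for $\tau$ below $\inf\phi_t/t$ we get $\hat\phi_\tau=0$ (using $\phi_t\to0$ as $t\to0$), and for $\tau>C$ we get $\hat\phi_\tau=-\infty$. For maximality, $P[\hat\phi_\tau](0)$ has the same singularity type as $\hat\phi_\tau$ and is at most $0$. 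Replace $\hat\phi_\tau$ by this envelope and transform back. This produces a subgeodesic ray above $\phi$ with the same boundary behaviour on every finite strip. Since $\phi$ solves the Dirichlet problem there, the comparison principle (uniqueness in Lemma \ref{lem: boundedBVP_solution}) forces equality, so $\hat\phi_\tau=P[\hat\phi_\tau](0)$.

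\textbf{Step 4: the transforms are mutually inverse.} This is pointwise Legendre duality. For each $x$, the function $t\mapsto\phi_t(x)$ is convex, and $\tau\mapsto\psi_\tau(x)$ is concave and usc, so Fenchel--Moreau involutivity applies; the convention $\psi_\tau=-\infty$ for large $\tau$ is handled naturally.

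\textbf{Main obstacle.} The crux is the singularity-type comparison in Steps 2 and 3. There one must show that a competitor's Legendre transform satisfies $\hat w_\tau\le\psi_\tau+C_\tau$, which is the step that lets maximality apply. I would argue this by controlling the behaviour of $w_t-t\tau$ as $t\to\infty$, using the linear growth bound. I expect this to need care at the endpoint values $\tau=\tau^\pm$.
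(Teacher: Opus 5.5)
The survey itself gives no proof of this theorem (it only cites \cite{RWN14, Da17}), so I compare your proposal with the standard argument. Your Steps 1 and 4 are fine, and your four-step architecture is the right one. However, Steps 2 and 3 do not work as written.

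\textbf{Step 2.} The implication ``$w_T\le\check\psi_T$ forces $\hat w_\tau\le\psi_\tau+C$'' is false. Since $T\tau^-_\psi\le\check\psi_T\le T\tau^+_\psi$, a comparison at the single time $T$ only yields $\hat w_\tau\le\check\psi_T-T\tau$, which is a bounded function. The singularities of $\psi_\tau$ are encoded in the behaviour of $\check\psi_t$ as $t\to\infty$, and a linear growth bound does not see them. For instance, take $w_t:=ct-K$ with $K>0$ and $c=\tau^-_\psi+K/T$. This is a competitor on $[0,T]$, since $w_t\le t\tau^-_\psi\le\check\psi_t$ there. Yet $\hat w_\tau\equiv-K$ for $\tau\le c$, which is not $\le\psi_\tau+C$ whenever $\psi_\tau$ is unbounded for some $\tau\in(\tau^-_\psi,c]$. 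Your extension (``the maximal sublinear subgeodesic below the same data'') is also undefined, because data at $t=0,T$ impose no upper bound for $t>T$.

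The missing idea is to glue with $\check\psi$ itself. Take the envelope $g^T$ as the competitor. It dominates $\check\psi$ on $(0,T)$ and converges uniformly to $\check\psi_T$ as $t\to T$ by \eqref{eq: u_boundary_est}. So $w:=g^T$ on $(0,T]$ and $w:=\check\psi$ on $[T,\infty)$ is a subgeodesic ray. Then $\hat w_\tau\le\inf_{t\ge T}(\check\psi_t-t\tau)$. Because $\check\psi_t-t\tau\ge t(\tau^-_\psi-\tau)$ is bounded below on $(0,T)$ while $\check\psi_T-T\tau$ is bounded above, this infimum is at most $\psi_\tau+C_{T,\tau}$. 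Now $\hat w_\tau\le 0$ and maximality give $\hat w_\tau\le\psi_\tau$, hence $g^T\le\check\psi$.

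\textbf{Step 3.} This step is circular. With $\chi_\tau:=P[\hat\phi_\tau](0)$ you get $\check\chi\ge\phi$, both starting at $0$. But invoking uniqueness on $[0,T]$ requires $\check\chi_T=\phi_T$, which is exactly what is to be proved. Moreover, $P[\hat\phi_\tau](0)$ need not have the singularity type of $\hat\phi_\tau$; it only has the same non-pluripolar mass (e.g.\ $P[u](0)=0$ for every unbounded $u\in\mathcal E$). Even a $\tau$-wise bound $\chi_\tau\le\hat\phi_\tau+C_\tau$ would not give $\check\chi\le\phi+C$, since $C_\tau$ need not be uniform.

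What you need is the following maximality property of a bounded geodesic ray: if $w$ is a subgeodesic ray with $\lim_{t\to0}w_t\le0$ and $w_t\le\phi_t+C$ for all $t$, then $w\le\phi$. To see this, note that $w_t-Ct/T$ is a competitor in \eqref{eq: udef1} for the data $0,\phi_T$ on $[0,T]$. The envelope of that problem is $\phi|_{[0,T]}$ by Lemma~\ref{lem: boundedBVP_solution}, so $w_t\le\phi_t+Ct/T$; now let $T\to\infty$. Apply this to $w_t:=v+t\tau$, where $v$ is any candidate for $P[\hat\phi_\tau](0)$, i.e.\ $v\le0$ and $v\le\hat\phi_\tau+C$. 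Since $\hat\phi_\tau\le\phi_t-t\tau$, we get $w_t\le\phi_t+C$. Hence $v+t\tau\le\phi_t$ for all $t>0$, i.e.\ $v\le\hat\phi_\tau$. Thus $P[\hat\phi_\tau](0)=\hat\phi_\tau$, with no need to pass through $\check\chi$ or singularity types.
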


\paragraph{The metric geometry of rays.} Let $\mathcal R$ denote the space of bounded weak geodesic rays emanating from $0$. For $\{u_t^0\}, \{u_t^1\} \in \mathcal R$, the $L^1$ \emph{chordal distance} between these rays is defined as:
\[
d_1^c(\{u_t^0\}, \{u_t^1\}) := \lim_{t \to \infty} \frac{d_1(u_t^0, u_t^1)}{t}.
\]

As shown in \cite{BDL1}, this limit exists, because $t \to d_1(u_t^0, u_t^1) $ is convex. Moreover one can define geodesic segments in $(\mathcal R, d_1^c)$ in the usual metric sense: for each $s \in [0,1]$, there exists a geodesic interpolation $\{u_t^s\} \in \mathcal R$ such that
\[
d_1^c(\{u_t^s\}, \{u_t^{s'}\}) = |s - s'| \cdot d_1^c(\{u_t^0\}, \{u_t^1\}), \ s,s' \in [0,1].
\]
This geodesic segment was constructed via ad hoc methods in \cite{DL20}, and it is natural to seek a more canonical interpretation. In this direction, we formulate the following question:

\begin{question}
Let $[0,1] \ni s \mapsto \{u_t^s\} \in \mathcal R$ denote the $d_1^c$-geodesic connecting $\{u_t^0\}$ and $\{u_t^1\}$. How can this curve be described `canonically'? More specifically, does the curve (or its Legendre transform in $t$) satisfy an extremal property? Does it solve an equation?
\end{question}

Given their significance in K-stability, one is particularly interested in geodesic rays that are approximable by algebraic data. These so-called \emph{approximable} (or maximal) rays $t \to u_t$ are characterized by a regularity property: their Legendre transforms $\hat u_\tau$ are not only model, but also \emph{$\mathcal I$-model} in the sense of \cite{DX22,Tru}. Specifically, $v \in \PSH(X,\omega)$ is said to be $\mathcal I$-model if
\[
v = P[v](0)_\mathcal I,
\]
where the envelope $P[v](0)_\mathcal I$ is defined as:
\begin{equation}\label{eq: sing_env_def_I}
P[v](0)_\mathcal I := \operatorname{usc}_X \left( \sup \left\{ w \in \PSH(X,\omega) \ \middle|\ w \leq 0,\ \mathcal I(cw) \subset \mathcal I(cv) \text{ for all } c>0 \right\} \right).
\end{equation}
Here $\mathcal I(\phi)$ denotes the multiplier ideal sheaf of the quasi-psh function $\phi$. The germs of this sheaf are holomorphic functions $f$ such that $|f|^2e^{-\phi}$ is locally integrable.
 Non-Archimedean characterizations of approximable rays have been given earlier in \cite{BBJ}.

It is natural to ask whether the space of approximable rays is stable under geodesic interpolation:

\begin{conj}
Let $\{u_t^0\}, \{u_t^1\} \in \mathcal R$ be approximable rays, and let $s \mapsto \{u_t^s\}$ be the $d_1^c$-geodesic connecting them. Then for each $s \in [0,1]$, the ray $\{u_t^s\}$ is also approximable.
\end{conj}

Due to work of Blum-Liu-Xu-Zhuang, Finski and  Reboulet \cite{BLXZ, Fin1, Reb} there are now many tools available to tackle this conjecture in the projective setting, but very little is known in the transcendental case.

\paragraph{$C^{1, 1}$ approximation of finite energy rays.} In connection with Donaldson's geodesic stability conjecture, it was shown in \cite[Theorem 1.5]{DL20} that any finite energy geodesic ray with finite $K$ energy slope can be decreasingly approximated by $C^{1,\bar 1}$ geodesic rays. It is natural to ask whether this approximation property holds unconditionally, with full Hessian control:

\begin{question}
Suppose that $\{u_t\} \in \mathcal{R}$. Does there exist a decreasing sequence $\{u_t^k\} \in \mathcal{R}$ with $u_t^k \in C^{1, 1}$ for all $t\geq 0$ and $d_1^c(\{u_t\}, \{u_t^k\}) \to 0 ?$
\end{question}

\section{The initial value problem} 

In this final section, we discuss open questions related to the initial value problem for weak geodesics.

Let $a > 0$, and consider a bounded weak geodesic segment $[0,a] \ni t \mapsto u_t \in \PSH(X,\omega) \cap L^\infty$. Due to the convexity of $t \mapsto u_t(x)$, we may define the initial tangent vector pointwise by:
\[
\dot u_0(x) := \lim_{t \to 0^+} \frac{u_t(x) - u_0(x)}{t}, \quad x \in X.
\]
For simplicity, we assume throughout this section that $u_0 = 0$.

A natural question is whether, given a function $v \in L^\infty(X)$, there exists a weak geodesic $t \mapsto u_t$ with $u_0 = 0$ and $\dot u_0 = v$. However, such an existence question is ill-posed in this infinite-dimensional setting, even when considering weak geodesics of class $C^{1,1}$.

Instead, we focus on the question of \emph{uniqueness} of geodesics determined by their initial tangent vectors:

\begin{ques}\label{conj: IVP_conj}
Let $[0,a] \ni t \mapsto u_t, v_t \in \PSH(X,\omega) \cap L^\infty$ be bounded weak geodesic segments with $u_0 = v_0 = 0$. If $\dot u_0 = \dot v_0$, is it true that $u_t = v_t$ for all $t \in [0,a]$?
\end{ques}

A positive resolution of this problem would demonstrate that weak geodesics do not exhibit the “tree-like” behavior commonly associated with CAT(0) spaces. In the smooth category, an affirmative answer is known under additional regularity assumptions: if both $u_t$ and $v_t$ are non-degenerate and of class $C^3$, the conclusion follows from Monge–Ampère foliation theory \cite{RZ3}. However, the case of $C^{1,1}$ geodesics remains open. \smallskip

Even in the setting of rays (i.e., when $a = \infty$), the problem is far from being  understood. However for rays we can reformulate the problem using Legendre duality. More precisely, let $[0,\infty) \ni t \mapsto u_t, v_t \in \PSH(X,\omega) \cap L^\infty$ be weak geodesic rays satisfying $u_0 = v_0 = 0$ and $\dot{u}_0 = \dot{v}_0$.

By Theorem \ref{thm: max_test_curve_ray_duality}, to conclude that $u_t = v_t$ for all $t \geq 0$, it suffices to show that their Legendre transforms, defined in \eqref{eq: Leg_transf_de}, coincide:
\[
\hat u_\tau = \hat v_\tau, \quad \forall \tau \in \mathbb{R}.
\]

Elementary properties of the Legendre transform yield
\[
\{ \dot u_0 \geq \tau \} = \{ \hat u_\tau = 0 \}, \quad \forall \tau \in \mathbb{R},
\]
and similarly for $v$. Hence, the assumption $\dot u_0 = \dot v_0$ implies
\[
\{ \hat u_\tau = 0 \} = \{ \hat v_\tau = 0 \}, \quad \forall \tau \in \mathbb{R}.
\]
Due to the Ross--Witt Nystr\"om correspondence, $\hat u_\tau$ and $\hat v_\tau$ are model (recall \eqref{eq: model_def}). Consequently, to deduce $\hat u_\tau = \hat v_\tau$, it suffices to show that two model potentials coincide whenever their zero-contact sets coincide, leading to a natural question:

\begin{ques}\label{conj: model}
Let $\phi, \psi \in \PSH(X,\omega)$ be model potentials satisfying $\{\phi = 0\} = \{\psi = 0\}$ together with $\int_X \omega_\phi^n > 0$ and $\int_X \omega_\psi^n > 0$. Is it true that $\phi = \psi$?
\end{ques}
A positive answer to this question would in turn imply a positive answer to Question \ref{conj: IVP_conj} for geodesic rays. In this sense, the uniqueness of weak geodesic rays seems to be intimately connected to the structure of model potentials.

We note that the condition $\{\phi = 0\} = \{\psi = 0\}$ is equivalent with the coincidence of (non-pluripolar) Monge--Amp\`ere measures $\omega_\phi^n = \omega_\psi^n$ \cite[Theorem 1]{DNT}. Thus, equivalently, the above question asks whether the Monge–Ampère measure uniquely determines a model potential.

\footnotesize
\let\OLDthebibliography\thebibliography 
\renewcommand\thebibliography[1]{
  \OLDthebibliography{#1}
  \setlength{\parskip}{1pt}
  \setlength{\itemsep}{1pt}
}

\normalsize
\noindent {\sc Department of Mathematics, University of Maryland}\\
{\tt tdarvas@umd.edu}\vspace{0.1in}

\begin{thebibliography}{0}

\bibitem{BaMa} S. Bando, T. Mabuchi, Uniqueness of Einstein Kähler metrics modulo connected group actions. Algebraic geometry, Sendai, 1985, 11--40, Adv. Stud. Pure Math., 10, North-Holland, Amsterdam, 1987.

\bibitem{BT} E. Bedford, B.A. Taylor, The Dirichlet problem for a complex Monge--Amp\`ere equation, Invent. Math. 37 (1976), 1--44.

\bibitem{BBJ} R. Berman, S. Boucksom, and M. Jonsson. A variational approach to the Yau–Tian–Donaldson conjecture, J. Amer. Math. Soc. 34 (2021), no. 3, 605--652.


\bibitem{BDL1} R. Berman, T. Darvas, C.H. Lu, Convexity of the extended K-energy and the long time behavior of the Calabi flow, Geom. and Topol. 21 (2017), no. 5, 2945--2988.

\bibitem{BDL2} R. Berman, T. Darvas, C.H. Lu, Regularity of weak minimizers of the K-energy and applications to properness and K-stability, Ann. Sci. Éc. Norm. Supér. (4) 53 (2020), no. 2, 267--289.

\bibitem{br2} B. Berndtsson, Probability measures related to geodesics in the space of K\"ahler metrics, arXiv:0907.1806.

\bibitem{Br15} B. Berndtsson, A Brunn-Minkowski type inequality for Fano manifolds and some uniqueness theorems in Kähler geometry. Invent. Math. 200 (2015), no. 1, 149--200.

\bibitem{BrBr} R. Berman, B.  Berndtsson, Convexity of the $K$ energy on the space of Kähler metrics and uniqueness of extremal metrics. J. Amer. Math. Soc. 30 (2017), no. 4, 1165--1196.

\bibitem{Bl1} Z. Błocki, A gradient estimate in the Calabi-Yau theorem, Mathematische Annalen 344 (2009), 317-327.

\bibitem{Bl2}Z. Błocki, The complex Monge-Amp\`ere equation in Kähler geometry, course given at CIME Summer School in Pluripotential Theory, Cetraro, Italy, July 2011, eds. F. Bracci, J. E. Fornæss, Lecture Notes in Mathematics 2075, pp. 95-142, Springer, 2013.

\bibitem{BouGeo} S. Boucksom, Monge-Ampère equations on complex manifolds with boundary,
in: Complex Monge-Ampère equations and geodesics in the space of Kähler metrics, V. Guedj editor. Lecture Notes in Mathematics, 2038. Springer, Heidelberg (2012).
\bibitem{BHJ} S. Boucksom, T. Hisamoto, M. Jonsson, Uniform K-stability, Duistermaat-Heckman measures and singularities of pairs. Ann. Inst. Fourier (Grenoble) 67 (2017), no. 2, 743--841.

\bibitem{BJ25}  S. Boucksom, M. Jonsson, On the Yau-Tian-Donaldson conjecture for weighted cscK metrics, arXiv:2509.15016.

\bibitem{BLXZ} H. Blum, Y. Liu, C. Xu, Z. Zhuang, The existence of the K\"ahler-Ricci soliton degeneration. Forum Math. Pi 11 (2023), Paper No. e9, 28 pp




\bibitem{Cal} E. Calabi, The variation of K\"ahler metrics. I. The structure of the space; II. A minimum problem, Bull. Amer. Math. Soc. 60 (1954), 167--168.

\bibitem{CC} E. Calabi, X.X. Chen, The space of K\"ahler metrics.II., J. Differential Geom. 61 (2002), no. 2, 173-193.

\bibitem{Cala} S. Calamai, The Calabi metric for the space of K\"ahler metrics. Math. Ann. 353 (2012), no. 2, 373--402.
\bibitem{CalZh} S. Calamai, K. Zheng, The Dirichlet and the weighted metrics for the space of Kähler metrics. Math. Ann. 363 (2015), no. 3-4, 817--856.

\bibitem{Ce98} U. Cegrell, Pluricomplex energy, Acta Math. 180 (1998), 187–217.

\bibitem{CC1} X. X. Chen, J. Cheng, On the constant scalar curvature Kähler metrics (I)—A priori estimates. J. Amer. Math. Soc. 34 (2021), no. 4, 909--936.



\bibitem{CC2} X.X. Chen, J. Cheng, On the constant scalar curvature Kähler metrics (II)—Existence results. J. Amer. Math. Soc. 34 (2021), no. 4, 937--1009.
arXiv:1801.00656.


\bibitem{CCX} X.X. Chen, J. Cheng, Y. Xu, A bird's eye view on geodesic problems in the space of Kähler potentials, 55 (2025), no. 4.
Volume 55, Issue 1,

\bibitem{Ch}  X.X. Chen, The space of K\"ahler metrics, J. Differential Geom. 56 (2000), no. 2, 189--234.

\bibitem{CFH} X.X. Chen, M. Feldman, J. Hu, Geodesic convexity of small neighborhood in the space of Kähler potentials. J. Funct. Anal. 279 (2020), no. 7, 108603, 65 pp. 

\bibitem{CS} X.X. Chen,S.  Sun, Space of Kähler metrics (V)—Kähler quantization. Metric and differential geometry, 19--41, Progr. Math., 297, Birkhäuser/Springer, Basel, 2012. 

\bibitem{CTW} J. Chu, V. Tosatti, B. Weinkove, On the $C^{1,1}$ regularity of geodesics in the space of Kähler metrics. Ann. PDE 3 (2017), no. 2, Paper No. 15

\bibitem{ClRb} B. Clarke, Y.A.  Rubinstein, Ricci flow and the metric completion of the space of Kähler metrics. Amer. J. Math. 135 (2013), no. 6, 1477--1505.


\bibitem{Da15} T. Darvas, The Mabuchi geometry of finite energy classes, Adv. Math. 285 (2015), 182-219.

\bibitem{Da17} T. Darvas, Weak geodesic rays in the space of K\"ahler potentials and the class $\mathcal E(X,\omega)$, J. Inst. Math. Jussieu 16 (2017), no. 4, 837--858. arXiv:1307.6822.

\bibitem{Da17a} T. Darvas, The Mabuchi completion of the space of K\"ahler metrics, Amer. J. Math. 139 (2017), no. 5, 1275--1313.

\bibitem{Da17s} T. Darvas, Geometric pluripotential theory on Kähler manifolds, Advances in complex geometry, 1-104, Contemp. Math. 735, Amer. Math. Soc., Providence, RI, 2019.

\bibitem{Da17m} T. Darvas, Metric geometry of normal Kähler spaces, energy properness, and existence of canonical metrics, IMRN (2017), no. 22, 6752-6777.


\bibitem{Da24} T. Darvas, The Mabuchi geometry of low energy classes, Math. Ann. 389 (2024), no. 1, 427-450.


\bibitem{DGS} T. Darvas, E. Goerge, K. Smith, Optimal asymptotic of the J functional with respect to the $d_1$ metric, Selecta Math. (N.S.) 28 (2022), no. 2, Paper No. 43.


\bibitem{DL20} T. Darvas, C.H. Lu, Geodesic stability, the space of rays, and uniform convexity in Mabuchi geometry, Geom. and Topol. 24 (2020), no. 4, 1907--1967. arXiv:1810.04661.

\bibitem{DDL2} T. Darvas,  E. Di Nezza and C. H. Lu, Monotonicity of non-pluripolar products and complex Monge-Ampere equations with prescribed singularity, Analysis \& PDE 11 (2018), no. 8.

\bibitem{DDL3} T. Darvas,  E. Di Nezza and C. H. Lu,  $L^1$ metric geometry of big cohomology classes, Ann. Inst. Fourier (Grenoble) 68 (2018), no. 7, 3053--3086.


\bibitem{DL12} T. Darvas, L. Lempert, Weak geodesics in the space of Kähler metrics, Math. Res. Lett. 19 (2012), no. 5, 1127--1135. 

\bibitem{DLR} T. Darvas, C.H.  Lu, Y.A.  Rubinstein, Quantization in geometric pluripotential theory. Comm. Pure Appl. Math. 73 (2020), no. 5, 1100--1138.

\bibitem{DX22} T. Darvas, M. Xia, The closures of test configurations and algebraic singularity types, Adv. Math. 397 (2022), Paper No. 108198. 


\bibitem{DZ25} T. Darvas, K. Zhang, A YTD correspondence for constant scalar curvature metrics, with K. Zhang, arXiv:2509.15173.

\bibitem{DiG} E. Di Nezza, V. Guedj, Geometry and topology of the space of K\"ahler metrics on singular varieties, Compositio Mathematica, 154 (2018), no. 8, 1593–1632.

\bibitem{DiLu} E. Di Nezza, C.H.  Lu, $L^p$ metric geometry of big and nef cohomology classes. Acta Math. Vietnam. 45 (2020), no. 1, 53--69. 

\bibitem{DNT} E. Di Nezza,  S. Trapani, Monge--Amp\`ere measures on contact sets. Math. Res. Lett. 28 (2021), no. 5, 1337--1352. 



\bibitem{Do} S. K. Donaldson, Symmetric spaces, K\"ahler geometry and Hamiltonian dynamics. In Northern California Symplectic Geometry Seminar, volume 196 of Amer. Math. Soc. Transl. Ser. 2, pages 13--33. Amer. Math. Soc., Providence, RI, 1999.

\bibitem{Gbook} V. Guedj (ed.), Complex Monge–Ampère Equations and Geodesics in the Space of Kähler Metrics, Springer 2012.

\bibitem{Guconj} V. Guedj, The metric completion of the Riemannian space of K\"ahler metrics, arXiv:1401.7857.

\bibitem{GZ} V. Guedj, A. Zeriahi, The weighted Monge-Amp\`ere energy of quasiplurisubharmonic
functions, J. Funct. Anal. 250 (2007), no. 2, 442–482.

\bibitem{Gup24} P. Gupta, Complete geodesic metrics in big classes, Trans. Amer. Math. Soc. 378 (2025), no. 6, 4129--4172.

\bibitem{Fin1} S. Finski, Geometry at infinity of the space of Kähler potentials and asymptotic properties of filtrations. J. Reine Angew. Math. 818 (2025), 115--164.

\bibitem{Fin2} S. Finski, Submultiplicative norms and filtrations on section rings, arXiv:2210.03039 , to appear in Proc. Lond. Math. Soc.

\bibitem{Fin3} S. Finski, Small eigenvalues of Toeplitz operators, Lebesgue envelopes and Mabuchi geometry, arXiv:2502.01554.

\bibitem{He} W. He,  On the space of Kähler potentials. Comm. Pure Appl. Math. 68 (2015), no. 2, 332--343.
\bibitem{Hu} J. Hu, The preservation of convexity by geodesics in the space of Kähler potentials on complex affine manifolds, arXiv:2211.12678 .


\bibitem{LiG} C. Li, Geodesic rays and stability in the cscK problem , Annales Scientifiques de l'ENS (4) 55 (2022), no.6, 1529-1574.


\bibitem{LV} L. Lempert, L. Vivas, Geodesics in the space of K\"ahler metrics. Duke Math. J. 162 (2013), no. 7,1369--1381.

\bibitem{Ma} T. Mabuchi, Some symplectic geometry on compact K\"ahler manifolds I, Osaka J. Math. 24, 1987.


\bibitem{PS1} D. Phong, J. Sturm,  The Monge-Ampère operator and geodesics in the space of Kähler potentials. Invent. Math. 166 (2006), no. 1, 125--149.

\bibitem{PS} D. Phong, J. Sturm,  Lectures on stability and constant scalar curvature. Current developments in mathematics, 2007, 101--176, Int. Press, Somerville, MA, 2009.

\bibitem{RWN14}  J. Ross, D.Witt Nystr\"om, Analytic test configurations and geodesic rays, J. Symplectic Geom. Volume 12, Number 1 (2014), 125–169.
\bibitem{RWN} J. Ross, D.Witt Nystr\"om, On the maximal rank problem for the complex homogeneous Monge-Ampère equation. Anal. PDE 12 (2019), no. 2, 493--503.  

\bibitem{Reb} R. Reboulet, Plurisubharmonic geodesics in spaces of non-Archimedean metrics of finite energy. J. Reine Angew. Math. 793 (2022), 59--103.

\bibitem{Ysurv} Y. A. Rubinstein, An introduction to Kahler geometry: Tian's properness conjectures, in: Geometric Analysis (J. Chen et al., Eds.), Progress in Mathematics, Vol. 333, Birkhauser, 2020, pp. 381-443.
\bibitem{RZ3} Y. A. Rubinstein, S. Zelditch, The Cauchy problem for the homogeneous Monge-Ampère equation, III. Lifespan. J. Reine Angew. Math. 724 (2017), 105--143.

\bibitem{Se} S. Semmes, Complex Monge-Amp`ere and symplectic manifolds, Amer. J. Math. 114 (1992).

\bibitem{St} J. Streets, Long time existence of Minimizing Movement solutions of Calabi flow, Adv. Math. 259 (2014), 688–729.

\bibitem{Sze_book} G. Sz\'ekelyhidi,  An introduction to extremal Kähler metrics. Graduate Studies in Mathematics, 152. American Mathematical Society, Providence, RI, 2014. 

\bibitem{Tru22}A. Trusiani, $L^1$ metric geometry of potentials with prescribed singularities on compact K\"ahler manifolds. J. Geom. Anal. 32 (2022), no. 2, Paper No. 37

\bibitem{Tru}A. Trusiani, K\"ahler-Einstein metrics with prescribed singularities on Fano manifolds. J. Reine Angew. Math. 793 (2022), 1--57.

\bibitem{Yau}
S.T. Yau, On the Ricci curvature of a compact K\"ahler manifold and the complex Monge-Amp\`ere equation. I,
Comm. Pure Appl. Math. 31 (1978), no. 3, 339--411.

\end{thebibliography}
\end{document}